\documentclass[pdflatex]{sn-jnl}
\usepackage{float}
\usepackage{verbatim}
\usepackage{graphicx}
\usepackage{multirow}
\usepackage{amsmath,amssymb,amsfonts}
\usepackage{amsthm}
\usepackage{mathrsfs}
\def\h{\hfill\triangle}
\usepackage[title]{appendix}
\usepackage{xcolor}
\usepackage{textcomp}
\usepackage{manyfoot}
\usepackage{booktabs}
\usepackage{algorithm}
\usepackage{algorithmicx}
\usepackage{tikz}
\usepackage{caption}
\usepackage{algpseudocode}
\usepackage{listings}
\usepackage{pgfplots}
\pgfplotsset{compat=1.18}
\newtheorem{Theorem}{Theorem}[section]

\newtheorem{Lemma}[Theorem]{Lemma}

\newtheorem{Definition}[Theorem]{Definition}
\newtheorem{Example}[Theorem]{Example}
\begin{document}

\title[Convergence Rates for the Alternating Minimization Algorithm in Structured Nonsmooth and Nonconvex Optimization]
{Convergence Rates for the Alternating Minimization Algorithm in Structured Nonsmooth and Nonconvex Optimization}

\author*[1]{\fnm{Glaydston C.} \sur{Bento}}\email{glaydston@ufg.br}

\author[2]{\fnm{Boris S.} \sur{Mordukhovich}}\email{aa1086@wayne.edu}
\equalcont{These authors contributed equally to this work.}

\author[1]{\fnm{Tiago S.} \sur{Mota}}\email{tiago.mota@discente.ufg.br}
\equalcont{These authors contributed equally to this work.}

\author[3]{\fnm{Antoine} \sur{Soubeyran}}\email{antoine.soubeyran@gmail.com}
\equalcont{These authors contributed equally to this work.}

\affil*[1]{\orgdiv{Institute of Mathematics and Statistics, IME}, \orgname{Federal University of Goi\'as}, \orgaddress{\city{Goi\^ania}, \postcode{74690-900}, \state{Goi\'as}, \country{Brazil}}}

\affil[2]{\orgdiv{Department of Mathematics and Institute for Artificial Intelligence and Data Science}, \orgname{Wayne State University}, \orgaddress{\city{Detroit}, \state{Michigan}, \country{United States}}}

\affil[3]{\orgdiv{Aix Marseille Univ},  \orgaddress{\street{CNRS}, \city{AMSE}, \postcode{610101}, \state{Marseille}, \country{France}}}

\abstract{This paper is devoted to developing the alternating minimization algorithm for problems of structured nonconvex optimization proposed by Attouch, Bolt\'e,  Redont, and Soubeyran in 2010. Our main result provides significant improvements of the convergence rate of the algorithm, especially under the 
low exponent Polyak-Łojasiewicz-Kurdyka condition when we establish either finite termination of this algorithm or its superlinear convergence rate instead of the previously known linear convergence. We also investigate the PLK exponent calculus and  discuss applications to noncooperative games and behavioral science.}
\keywords{nonsmooth optimization, alternating minimization algorithm, Polyak-\L ojasiewicz-Kurdyka conditions, convergence rates, noncooperative games}
\pacs[MSC Classification]{90C26, 49J52, 49J53, 90D10}

\maketitle

\section{Introduction}\label{sec1}

In this paper, we consider a class of minimization problems for cost functions $L:\mathbb{R}^n \times\ \mathbb{R}^m \to \overline{\mathbb{R}}:=\mathbb{R}\cup \{\infty\}$ given in the structured form of constrained optimization:
\begin{equation}\label{L}
\mbox{minimize }\;L(x, y): = f(x) + Q(x, y) + g(y),
\end{equation}
where $f$ and $g$ are proper lower semicontinuous functions on $\mathbb{R}^n$, not necessarily convex, and where $Q$ is a smooth function that couples the variables $x$ and $y$. Recall that a function $h\colon\mathbb{R}^n\to\overline{\mathbb{R}}$ is {\em proper} if ${\rm dom}\,h:=\{x\in\mathbb{R}^n\;|\;h(x)<\infty\}\ne\emptyset$.

It has been well recognized that optimization problems in the structural form \eqref{L} appear in many important applications including, in particular, noncooperative games, machine learning, image reconstruction, approximation theory, behavior science, etc.; see \cite{attouch2008alternating,attouch2010proximal,attouch2007new} and the references therein.

Our main attention in this paper is paid to the {\em alternative minimization algorithm} designed and investigated in \cite{attouch2010proximal} for nonconvex structured problems of type \eqref{L}. The algorithm's dynamics are described as follows. Let \( (x_0, y_0) \in \mathbb{R}^n \times \mathbb{R}^m \), the sequence is generated iteratively of the form $(x_k, y_k) \to (x_{k+1}, y_k) \to (x_{k+1}, y_{k+1})$ by
\begin{equation}\label{proximalx}
x_{k+1} \in \arg\min \left\{ L(u, y_k) + \frac{1}{2 \lambda_k} \| u - x_k \|^2\;\Big|\;u \in \mathbb{R}^n \right\},
\end{equation}
\begin{equation}\label{proximaly}
y_{k+1} \in \arg\min \left\{ L(x_{k+1}, v) + \frac{1}{2 \mu_k} \| v - y_k \|^2\;\Big|\;v\in \mathbb{R}^m \right\}.
\end{equation}
where $\{\lambda_k\}$ and $\{\mu_k\}$ are positive sequences. This algorithm was originally developed in \cite{attouch2008alternating,attouch2007new} for convex and weakly convex problems, while its nonconvex generality explored in \cite{attouch2010proximal} opened new perspectives and significantly extended the spectrum of important applications to various optimization-related areas.  

The main results of \cite{attouch2010proximal} revolve around establishing global convergence and convergence rates of the iterates in \eqref{proximalx} and \eqref{proximaly} under some versions of the "Kurdyka-\L ojasiewicz inequality" as labeled in \cite{attouch2010proximal}, which we now unify---by following \cite{bento2025convergence}---under the name of the {\em Polyak-\L ojasiewicz-Kurdyka $(PLK)$ conditions}; see Section~\ref{sec:plk} for the exact formulation, historical remarks, and discussions. 

Here we provide significant improvements of convergence rates from \cite[Theorem~11]{attouch2010proximal} establishing, in particular, either finite termination or superlinear convergence for the alternating minimization algorithm under the lower exponent PLK condition instead of the linear convergence obtained in \cite{attouch2010proximal}. Moreover, new convergence rate results are derived for cost function values along the iterative sequence. We also investigate calculus rules for PLK exponents and construct several examples illustrating the revealed phenomena. Finally, some applications to noncooperative games and behavioral science are briefly discussed.\vspace*{0.05in}

The rest of the paper is organized as follows. In Section~\ref{sec:plk}, we first present the basic assumptions and then formulate and discuss the PLK conditions used in deriving the main results. Section~\ref{sec2} investigates calculus rule for PLK components needed to deal with the structured form \eqref{L}. We construct here five examples illustrating some striking PLK phenomena. Section~\ref{sec3} presents our main results about convergence rates of the alternating minimization algorithm under the exponent PLK conditions. Section~\ref{sec:appl} concerns applications of the main results to some models of noncooperative game theory and behavioral science.

\section{Basic Assumptions and PLK Conditions}\label{sec:plk}

First we formulate the standing assumptions that are used throughout the paper without further mentioning. They are exactly the same as in \cite{attouch2010proximal}, namely:
\[
(\mathcal{H})
\begin{cases}
L(x,y) = f(x) + Q(x, y) + g(y), & \\
f: \mathbb{R}^n \to\overline{\mathbb{R}}, & \text{is proper lower semicontinuous,} \\
g: \mathbb{R}^m \to \mathbb{R}\to\overline{\mathbb{R}}, & \text{is proper lower semicontinuous,} \\
Q: \mathbb{R}^n \times \mathbb{R}^m \to \mathbb{R}, & \text{is a } {\cal C}^1-\text{smooth function,} \\
\nabla Q &\hspace{-1cm} \text{is Lipschitz continuous on bounded subsets of } \mathbb{R}^n \times \mathbb{R}^m,
\end{cases}
\]
\[
(\mathcal{H}_1)
\begin{cases}
\inf_{\mathbb{R}^n \times \mathbb{R}^m} L > -\infty, \\
\text{the function } L(\cdot, y_0) \text{ is proper}, \\
\text{for some positive } r_- < r_+, \text{ the sequences of stepsizes } \lambda_k, \mu_k \text{ belong to } (r_-, r_+).
\end{cases}
\]\vspace*{0.05in}

It is shown in \cite[Lemma~3.1]{attouch2010proximal} that the imposed assumptions ensure that the alternating minimization algorithm in \eqref{proximalx}, \eqref{proximaly} is well-defined and enjoys the desired descent property together with other well-posedness properties broadly used in the paper. To present this result, we need to recall the main subdifferential notion for extended-real-valued proper functions used in what follows. Given $h\colon\mathbb{R}^n\to\overline{\mathbb{R}}$ and $\bar x\in{\rm dom}\,h$, the {\em limiting/Mordukhovich subdifferential} of $h$ at $\bar x$ is defined by 
\begin{equation}\label{lsub}
\partial_M h(\bar{x}):=\big\{v\in\mathbb{R}^n\;\big|\;\exists x_k\stackrel{h}{\to}\bar{x}, \,v_k\in\partial^Fh(x_k),\;v_k\to v\big\},
\end{equation}
where $x_k\stackrel{h}{\to}\bar{x}$ means that $x_k\to \bar{x}$ and $h(x_k)\to f(x)$, and where 
\begin{equation}\label{rsub}
\partial_F h(x):=\Big\{v\in\mathbb{R}^n\;\Big|\;\limsup_{u\to x}\frac{h(u)-h(x)-\langle v,u-x\rangle}{\|u-x\|}\ge 0\big\}
\end{equation}
is known as the {\em regular/Fr\'echet subdifferential} of $h$ at $x$. It has been realized in variational analysis and optimization that the limiting subdifferential \eqref{lsub} is a robust construction enjoying comprehensive {\em calculus rules} based on variational and extremal principles. On the other hand, the regular subdifferential \eqref{rsub} doesn't possess these properties while providing convenient approximation tools for computations. We refer the reader to the monographs \cite{mordukhovich2006variational,mordukhovich2018variational,rockafellar1998}
and the bibliographies therein for variational theories involving \eqref{lsub}, \eqref{rsub} in finite and infinite dimensions with numerous applications.\vspace*{0.05in}

The aforementioned result of \cite{attouch2010proximal} is formulated as follows.

\begin{Lemma}\label{basic-lem}
Under assumptions $(\mathcal{H})$ and $(\mathcal{H}_1)$, the sequences $\{x_k\}$ and $\{y_k\}$ are well-defined. Moreover, the following hold:  

{\bf(i)}  For all natural numbers $k\in\mathbb{N}$, we have the inequality
\begin{equation}\label{decrescimo}
L(x_k, y_k) + \frac{1}{2 \lambda_{k-1}} \|x_k - x_{k-1}\|^2 + \frac{1}{2 \mu_{k-1}} \|y_k - y_{k-1}\|^2 \leq L(x_{k-1}, y_{k-1}),
\end{equation}
and therefore the sequence $\{L(x_k,y_k)\}$ is not increasing. 

{\bf(ii)}  We have the convergent series
\[
\sum_{k=1}^{\infty} \left( \|x_k - x_{k-1}\|^2 + \|y_k - y_{k-1}\|^2 \right) < \infty,
\]
which implies, in particular, that 
\[
\lim_{k \to \infty} (\|x_k - x_{k-1}\| + \|y_k - y_{k-1}\|) = 0.
\]

{\bf(iii)}  For all $k\in\mathbb{N}$, define  
\[
(x^*_k, y^*_k): = \left( \nabla_x Q(x_k, y_k) - \nabla_x Q(x_k, y_{k-1}), 0 \right) - \left( \frac{1}{\lambda_{k-1}} (x_k - x_{k-1}), \frac{1}{\mu_{k-1}} (y_k - y_{k-1}) \right).  
\]
Then we have the inclusion
\[
(x^*_k, y^*_k) \in\partial L(x_k, y_k).  
\]
Furthermore, it follows for any bounded subsequence $\{(x_{k_j}, y_{k_j})\}$ of $\{(x_k,y_k)\}$ that
$
(x^*_{k_j}, y^*_{k_j}) \to(0,0) \quad \text{as} \quad k_j \to \infty
$  
and hence $\text{\rm dist}(0, \partial L(x_{k_j}, y_{k_j})) \to 0 \quad \text{as} \quad k_j \to \infty.
$
\end{Lemma}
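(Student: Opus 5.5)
The plan follows \cite[Lemma~3.1]{attouch2010proximal} and uses only $(\mathcal{H})$, $(\mathcal{H}_1)$ and elementary subdifferential calculus.

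\textbf{Well-definedness.} Fix $k$ with $L(\cdot,y_k)$ proper; for $k=0$ this is assumed in $(\mathcal{H}_1)$. The function
\[
u\mapsto L(u,y_k)+\frac{1}{2\lambda_k}\|u-x_k\|^2
\]
is lower semicontinuous, since $f$ is lsc and $Q$ is continuous. It is bounded below by $\inf L>-\infty$ plus a quadratic, so it is coercive, and it is proper. Hence its set of minimizers is nonempty, and $x_{k+1}$ exists. Comparing with the admissible point $u=x_k$ gives
\[
L(x_{k+1},y_k)+\frac{1}{2\lambda_k}\|x_{k+1}-x_k\|^2\le L(x_k,y_k)<\infty .
\]
In particular $L(x_{k+1},\cdot)$ is proper, because $y_k$ lies in its domain. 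The same argument in $v$ produces $y_{k+1}$ and
\[
L(x_{k+1},y_{k+1})+\frac{1}{2\mu_k}\|y_{k+1}-y_k\|^2\le L(x_{k+1},y_k).
\]
Induction on $k$ shows that both sequences are well defined.

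\textbf{Parts (i) and (ii).} Adding the two comparison inequalities and shifting the index yields \eqref{decrescimo}, and monotonicity of $\{L(x_k,y_k)\}$ follows at once. For (ii), sum \eqref{decrescimo} from $k=1$ to $N$; the right-hand sides telescope. Using $\lambda_{k-1},\mu_{k-1}<r_+$, we get
\[
\frac{1}{2r_+}\sum_{k=1}^N\big(\|x_k-x_{k-1}\|^2+\|y_k-y_{k-1}\|^2\big)\le L(x_0,y_0)-\inf L<\infty,
\]
where $L(x_0,y_0)$ may be taken finite after one step. Letting $N\to\infty$ gives the convergent series, and the terms of a convergent series tend to zero.

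\textbf{Part (iii).} This is the step needing care. Write the first-order optimality conditions in the form $0\in\partial_F(\cdot)$ of the minimized functions. The Fréchet and limiting sum rules with a $\mathcal{C}^1$ summand give
\[
0\in\partial f(x_k)+\nabla_xQ(x_k,y_{k-1})+\tfrac{1}{\lambda_{k-1}}(x_k-x_{k-1}),
\]
\[
0\in\partial g(y_k)+\nabla_yQ(x_k,y_k)+\tfrac{1}{\mu_{k-1}}(y_k-y_{k-1}).
\]
Because $f(x)+g(y)$ is separable, we have $\partial f(x)\times\partial g(y)\subset\partial(f\oplus g)(x,y)$; for the limiting subdifferential this is in fact an equality, and it holds for the Fréchet one as well. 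Adding $\nabla Q(x_k,y_k)$ (smooth sum rule) shows $\partial L(x_k,y_k)=\partial f(x_k)\times\partial g(y_k)+\nabla Q(x_k,y_k)$. Substituting the two optimality inclusions, and noting that the $x$-gradient is evaluated at $y_{k-1}$ rather than $y_k$, gives exactly the stated $(x_k^*,y_k^*)$.

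For the convergence statement, a bounded subsequence stays in a bounded set where $\nabla Q$ is Lipschitz with some constant $M$. Then
\[
\|x^*_{k_j}\|\le M\|y_{k_j}-y_{k_j-1}\|+r_-^{-1}\|x_{k_j}-x_{k_j-1}\|,
\]
and similarly $\|y^*_{k_j}\|\le r_-^{-1}\|y_{k_j}-y_{k_j-1}\|$. Both bounds tend to $0$ by (ii). The previous points $(x_{k_j-1},y_{k_j-1})$ are also bounded, since consecutive differences vanish, so the Lipschitz constant applies. Hence $\operatorname{dist}(0,\partial L(x_{k_j},y_{k_j}))\to0$.

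The main obstacle is the subdifferential bookkeeping in (iii): justifying the product rule for the separable part and that the smooth sum rule holds with equality.
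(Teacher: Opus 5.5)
Your proposal is correct and is essentially the argument of \cite[Lemma~3.1]{attouch2010proximal}, which the paper simply invokes without reproducing. The steps match: existence via lower semicontinuity plus coercivity, the descent inequality by comparing with the previous iterate, telescoping for (ii), and Fermat's rule combined with the separable and smooth sum rules for (iii). One small correction at $k=0$: $L(x_0,y_0)$ may be $+\infty$, so the finiteness of $L(x_1,y_0)$ should come from the finiteness of the minimal value of the proper proximal subproblem rather than from the comparison with $u=x_0$; your remark about starting after one step already covers this.
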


Now we are ready to formulate the main conditions for our convergence analysis; cf.\ \cite{attouch2010proximal,bento2025convergence} for more details and references.

\begin{Definition}\label{def:kl} Let $h:\mathbb{R}^m \to\overline{\mathbb R}$ be an extended-real-valued lower semicontinuous function. We say that the function $h$ satisfies the $(basic)$ {\sc Polyak-\L ojasiewicz (PLK) condition} at $\bar{x}\in{\rm dom}\,h$ if there exist a number $\eta \in (0,\infty)$, a neighborhood $U$ of $\bar{x}$, and a concave continuous function $\varphi:[0,\eta] \to[0,\infty)$, called the {\sc desingularizing function}, such that
\begin{equation}\label{plk}
\varphi(0)=0,\quad\varphi\in {\cal C}^1(0,\eta),\quad\varphi'(s)>0\;\mbox{ for all }\; s\in(0,\eta),\;\mbox{ and }\;
\end{equation}
\begin{equation}\label{desigualdadeklND}
\varphi'\big(h(x)-h(\bar{x})\big)\mbox{\rm dist}\big(0,\partial h(x)\big)\ge 1\;\mbox{ for all }\;x\in U\cap[h (\bar x) <h(x)<h(\bar{x})+\eta].
\end{equation}\\[-0.5ex]
The {\sc exponent PLK condition} corresponds to \eqref{plk} and \eqref{desigualdadeklND} where the desingularizing function is chosen as $\varphi(t)=M t^{1-q}$ with some $M>0$ and $q\in[0,1)$. We refer to the case where $q\in(0,1/2)$ as the {\sc PLK condition with lower exponents}.
\end{Definition}

To the best of our knowledge, the original version of the PLK condition was introduced by Polyak in 1962 (the English translation of his paper was published in \cite{polyak1963gradient}), for ${\cal C}^1$-smooth functions $h$ with Lipschitzian gradients in Hilbert spaces, as the inequality
\begin{equation*}
\|\nabla h(x)\|\ge(1/2M)|h(x)-h(\Bar{x})|^{1/2},\quad M>0.
\end{equation*}
The main motivation and result (Theorem~4 of \cite{polyak1963gradient}) were to establish the linear convergence of the classical gradient descent method. Independently, \L ojasiewicz \cite{Lojasiewicz1963} introduced the inequality`
\begin{equation}\label{pl}
\|\nabla h(x)\|\geq b\,|h(x)-h(\Bar{x})|^q,\quad b:=1/M(1-q),\quad q\in[0,1),
\end{equation}
for analytic functions in the finite-dimensional framework of semialgebraic geometry with no applications to optimization. The gradient inequality \eqref{pl} is referred to (especially in the literature on machine learning and computer science) as the Polyak-\L ojasiewicz condition; see, e.g., \cite{karimi2016linear}. Later \cite{Kurdyka}, Kurdyka  extended the semialgebraic approach by \L ojasiewicz to the general class of o-minimal structures. Conditions of the PLK type were further extended to nonsmooth functions in terms of various subdifferentials (often under the name of the ``Kurdyka-\L ojasiewicz property") with many applications to constrained optimization and other important fields of mathematics and applied sciences. Among numerous publications using PLK conditions, we refer the reader to \cite{Absil2005,aragon2023coderivative,aragon2020boosted,attouch2009convergence,attouch2013convergence,bento2025convergence,bento2015generalized,mordukhovich2024} and the bibliographies therein. 

\section{Calculus of PLK Exponents}\label{sec2}

Given the structure of the function $L$ in \eqref{L}, it is crucial to determine the exponent of a desingularizing function $\varphi$ associated with $L$ in Definition~\ref{def:kl}, provided that the exponents of the desingularizing functions associated with $f$, $g$, and $Q$ are already known. The {\em calculus} of PLK exponents and its applications to the linear convergence of first-order methods has been a subject of study in recent years; see, e.g., \cite{li2018calculus}, where explicit convergence rates for various first-order methods are derived and applied to a wide range of optimization models.

Observe to this end that for specific choices of $Q$ and $g$, the structure of $L$ recovers an important class of bifunctions used in the convergence analysis of several first-order methods whose iterates involve {\em momentum terms}. For further details, we refer the reader to, e.g., \cite{boct2016inertial,ochs2014ipiano} with the convergence analysis of certain inertial proximal algorithms and to \cite{chambolle2015convergence} for investigating the convergence of the proximal gradient algorithm with extrapolation. In particular, for the case where $f$ has a PLK exponent $\alpha \in [1/2, 1)$, $g \equiv 0$, and $Q(x, y) = \|x - y\|^2$, it is shown in \cite[Theorem~3.6 and Theorem~5.1]{li2018calculus}, respectively, how to determine the PLK exponent of $L$ and establish the convergence rate of a certain inertial proximal algorithm with constant stepsizes.\vspace*{0.05in}

In this section, we continue the study of PLK exponent calculus associated with the exponent PLK condition from Definition~\ref{def:kl}. The first question to resolve is about the {\em consistency} of the assumptions on $L$ with the {\em existence} of PLK lower exponents. Then, besides considering calculus rules for computing PLK components applied to $L$ \eqref{L}, we raise a new question about the {\em smallest exponent} ensuring the fulfillment of exponent PLK condition and construct some striking examples illustrating this issue.\vspace*{0.05in} 

Let us begin with the consistency question. It is shown in \cite[Theorem~7]{bento2025convergence} that when $\bar x$ is a local  minimizer of the difference function $f_1-f_1$, where $f_1$ is ${\cal C}^1$-smooth around $\bar x$ and where $f_2$ is convex, the fulfillment of the lower exponent PLK  condition for $f_1-f_2$ at $\bar x$ is {\em inconsistent} with the Lipschitz continuity of the gradient mapping $\nabla f_1$ around $\bar x$. Now we construct a one-dimensional example showing that this is {\em not the case} for minimizing structured functions $L$ of type \eqref{L}.

\begin{Example}\label{exa1} {\rm Consider the function $L$ in \eqref{L} with $Q,f: \mathbb{R} \longrightarrow \mathbb{R}$ given by $Q(x): = x^2$, $f(x):=|x|^{3/2}$, and $g \equiv 0$. Observe that $\bar{x}=0$ is a global minimizer of $L$, that $f$ has the  exponent PLK property $q=1/3$, and that $Q$ has the Lipschitz continuous derivative and satisfies the exponent PLK inequality with $q=1/2$. We claim that $L$ satisfies the {\em lower exponent PLK condition} with $1/3$, i.e.,
\begin{equation}\label{even}
\Big|\frac{3}{2}{\rm sign}(x)\cdot|x|^{1/2} + 2x\Big|\ge\frac{1}{5}\Big[|x|^{3/2} + x^2\Big]^{1/3}. 
\end{equation}

\begin{center}
\begin{tikzpicture}
\begin{axis}[
axis lines=middle,
xlabel={$x$},
ylabel={$y$},
legend style={at={(1.05,1)}, anchor=north west},
domain=-5:5,
samples=300,
ymin=0, ymax=3
]
\addplot[blue, thick] {abs(2*x + (3/2)*sign(x)*abs(x)^(1/2))};
\addlegendentry{$|2x + \frac{3}{2} \, \text{sign}(x) \cdot |x|^{1/2}|$}
\addplot[red, thick, dashed] {(1/5)*(x^2 + abs(x)^(3/2))^(1/3)};
\addlegendentry{$\frac{1}{5}(x^2 + |x|^{3/2})^{1/3}$}
\end{axis}
\end{tikzpicture}
\end{center}

Indeed, since the functions on the both sides of \eqref{even} are even, it is sufficient to consider the case where $x>0$. Let
$$
f_1(x): = \left( \frac{3}{2}  x^{1/2} + 2x \right)^3\;\text{ and }\;g_1(x): = x^{3/2} + x^2.
$$
It follows from Newton's binomial theorem that
\[
f_1(x) = \frac{27}{8} x^{3/2} + \frac{27}{2} x^2 + 9 x^{5/2} + 8x^3.
\]
Therefore,  we have in a neighborhood of the origin that
\[
\left( \left| \frac{3}{2} \, \text{sign}(x) |x|^{1/2} + 2x \right| \right)^3 \geq |x|^{3/2} + x^2,
\]
which justifies the claim of this example.}
\end{Example}

To proceed further, recall the results from \cite[Theorem~3.3 and Theorem~3.6]{li2018calculus} providing calculus rules for PLK exponents of block-separable sums of functions satisfying the exponent PLK condition and for the potential function used in the convergence analysis of the inertial
proximal algorithm in \cite{ochs2014ipiano}. This is needed below for our subsequent discussions on the smallest PLK exponents.

\begin{Theorem}\label{funcaocoordenada}
Let $n_j, n \in \mathbb{N}$ for $j = 1, \dots, m$ be such that $\sum_{j=1}^m n_j = n$, and let
\begin{equation}\label{sum}
f(x) = \sum_{j=1}^m f_j(x_j),
\end{equation}
be given in the block separable sum form, where each $f_j$, $j=1,\ldots,m$, is a proper lower semicontinuous function on $\mathbb{R}^{n_j}$ with $x = (x_1, \dots, x_m) \in \mathbb{R}^n$. Suppose further that each $f_j$ is continuous on $\operatorname{dom} \partial f_j$ for $j = 1, \dots, m$ and satisfies the exponent PKL condition with exponent $\alpha_j \in (0,1)$. 
Then the function $f$ satisfies the exponent PLK condition with the exponent calculated by
\[
\alpha = \max\big\{ \alpha_j\;\big|\;j=1,.\ldots,m\big\}.
\]
\end{Theorem}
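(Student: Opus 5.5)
We prove the statement at an arbitrary point $\bar x=(\bar x_1,\dots,\bar x_m)\in\operatorname{dom}\partial f$, with the exponent PLK inequality for each $f_j$ at $\bar x_j$ taken in the equivalent form
\[
\operatorname{dist}(0,\partial f_j(x_j))\ge c_j\,\big(f_j(x_j)-f_j(\bar x_j)\big)^{\alpha_j}
\]
on $U_j\cap[f_j(\bar x_j)<f_j<f_j(\bar x_j)+\eta_j]$, where $c_j>0$. This is the form obtained from $\varphi(t)=Mt^{1-\alpha_j}$. We will also use the separable subdifferential rule
\[
\partial f(x)=\partial f_1(x_1)\times\cdots\times\partial f_m(x_m),
\]
which holds for block-separable sums of proper lsc functions (see, e.g., \cite{rockafellar1998}). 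It gives
\[
\operatorname{dist}(0,\partial f(x))^2=\sum_j \operatorname{dist}(0,\partial f_j(x_j))^2\ge \tfrac1m\Big(\sum_j\operatorname{dist}(0,\partial f_j(x_j))\Big)^2 .
\]

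The main obstacle is that, near $\bar x$, some components may satisfy $f_j(x_j)<f_j(\bar x_j)$ or $f_j(x_j)=f_j(\bar x_j)$ while $f(x)>f(\bar x)$. For those components the individual PLK inequality gives nothing. To handle this we use that each $\bar x_j$ is a critical point, or we reduce to that case.

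\textbf{Non-critical case.} If $0\notin\partial f(\bar x)$, then by outer semicontinuity of $\partial f$ relative to $f$-attentive convergence, $\operatorname{dist}(0,\partial f(x))\ge\delta>0$ near $\bar x$ on the relevant level set. The PLK inequality then holds trivially for any exponent, with $\eta$ small.

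\textbf{Critical case.} Assume $0\in\partial f_j(\bar x_j)$ for all $j$. Continuity of $f_j$ on $\operatorname{dom}\partial f_j$ lets us shrink $U_j$ so that $|f_j(x_j)-f_j(\bar x_j)|<\eta_j$ and $<1$ whenever $\partial f_j(x_j)\neq\emptyset$. Only such points matter, since otherwise $\operatorname{dist}=\infty$.

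The claim needed for the lower components is that for $x_j\in U_j$ with $f_j(x_j)<f_j(\bar x_j)$, the quantity $f_j(\bar x_j)-f_j(x_j)$ is harmless. Here I would follow \cite{li2018calculus}: use the exponent PLK property at $\bar x_j$ to show that $\bar x_j$ is a local minimizer of $f_j$ relative to $\operatorname{dom}\partial f_j$ near $\bar x_j$. Alternatively, one can bound the negative part by noting that $f(x)-f(\bar x)\le \sum_{j\in J_+}(f_j(x_j)-f_j(\bar x_j))$, where $J_+$ is the set of indices with strictly positive differences. This second inequality suffices on its own: negative and zero terms only decrease $f(x)-f(\bar x)$, so no minimizer claim is needed.

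With this, set $\alpha=\max_j\alpha_j$ and $s_j:=f_j(x_j)-f_j(\bar x_j)\in(0,1)$ for $j\in J_+$. Since $s_j<1$, we have $s_j^{\alpha_j}\ge s_j^{\alpha}$. Therefore
\[
\operatorname{dist}(0,\partial f(x))\ge\tfrac{1}{\sqrt m}\sum_{j\in J_+}c_j s_j^{\alpha}\ge \tfrac{c}{\sqrt m}\Big(\sum_{j\in J_+}s_j\Big)^{\alpha}\ge \tfrac{c}{\sqrt m}\big(f(x)-f(\bar x)\big)^{\alpha},
\]
with $c=\min_j c_j$. The middle step uses subadditivity of $t\mapsto t^\alpha$. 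The set $J_+$ is nonempty because $f(x)>f(\bar x)$.

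Finally, take $U=\prod_j U_j$ and $\eta$ small, and convert back to $\varphi(t)=Mt^{1-\alpha}$.
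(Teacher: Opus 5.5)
Your proof is correct. The paper gives no proof of this statement; it simply imports it from \cite[Theorem~3.3]{li2018calculus}. Your argument is a complete, self-contained proof along the standard lines for this rule:
\begin{itemize}
\item the non-critical reduction as in \cite[Lemma~2.1]{li2018calculus};
\item the product rule $\partial f(x)=\partial f_1(x_1)\times\cdots\times\partial f_m(x_m)$;
\item continuity on $\operatorname{dom}\partial f_j$, which places each $x_j$ with $j\in J_+$ in the window $0<s_j<\min\{\eta_j,1\}$;
\item the bound $f(x)-f(\bar x)\le\sum_{j\in J_+}s_j$ combined with subadditivity of $t\mapsto t^{\alpha}$.
\end{itemize}

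One correction: drop the first alternative you float for the lower components. The exponent PLK property at a critical point does not make that point a local minimizer, even relative to $\operatorname{dom}\partial f_j$. For example, $h(t)=t^3$ satisfies the exponent PLK condition at $0$ with exponent $2/3$, since $|h'(t)|=3t^2=3\,h(t)^{2/3}$ for $t>0$. It also satisfies it trivially at every other point, because those points are non-critical. Yet $0$ is not a local minimizer of $h$. The positive-part inequality you use instead is what actually carries the proof, and it needs no such claim.
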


\begin{Theorem}\label{meiobloco}
Let \( f \) is a proper lower semicontinuous function satisfying the exponent PLK condition at \( \bar{x} \in \operatorname{dom} \, \partial f \) with exponent \( \alpha \in \left[ \tfrac{1}{2}, 1 \right) \), and let \( \beta > 0 \). Consider the proximally perturbed function
\begin{equation}\label{prox}
F(x, y) := f(x) + \frac{\beta}{2} \|x - y\|^2.
\end{equation}
Then \( F \)  satisfies the exponent PLK condition at \( (\bar{x}, \bar{x}) \) with exponent \( \alpha \).
\end{Theorem}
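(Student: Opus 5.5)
The strategy is to verify the exponent PLK inequality for $F$ directly near $(\bar x,\bar x)$, using the one for $f$ at $\bar x$. Take $(x,y)$ close to $(\bar x,\bar x)$ with $F(\bar x,\bar x)=f(\bar x)<F(x,y)<f(\bar x)+\eta$. Since the quadratic term is smooth, the sum rule gives
\[
\partial F(x,y)=\big\{(v+\beta(x-y),\,-\beta(x-y))\;\big|\;v\in\partial f(x)\big\}.
\]
Set $w:=\beta(x-y)$. Every element of $\partial F(x,y)$ then has norm at least $c\,(\|v+w\|+\|w\|)\ge c\max\{\|v\|,\|w\|\}$ for some absolute constant $c>0$. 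This yields
\[
\mathrm{dist}\big(0,\partial F(x,y)\big)\ge c\max\big\{\mathrm{dist}(0,\partial f(x)),\,\beta\|x-y\|\big\}.
\]

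The target inequality is $\mathrm{dist}(0,\partial F(x,y))\ge b\,(F(x,y)-F(\bar x,\bar x))^{\alpha}$ for some $b>0$. Write $F-F(\bar x,\bar x)=(f(x)-f(\bar x))+\tfrac{\beta}{2}\|x-y\|^2$. By subadditivity of $t\mapsto t^\alpha$, it suffices to bound each piece separately.

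The first piece is handled by splitting into cases on the sign of $f(x)-f(\bar x)$.

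If $f(x)>f(\bar x)$, then for $(x,y)$ close enough and $F$-values close enough, $x$ lies in the PLK neighborhood of $f$ with $f(x)-f(\bar x)<\eta$. This uses $f(x)\le F(x,y)$. The exponent PLK condition for $f$ then gives $\mathrm{dist}(0,\partial f(x))\ge b_f (f(x)-f(\bar x))^{\alpha}$.

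If $f(x)\le f(\bar x)$, the first piece contributes nothing positive and may be dropped.

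The second piece uses $\alpha\ge 1/2$. Because $\|x-y\|$ is small, $(\tfrac{\beta}{2}\|x-y\|^2)^{\alpha}\le C\|x-y\|^{2\alpha}\le C\|x-y\|$. This is exactly where the hypothesis $\alpha\in[1/2,1)$ is needed.

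Combining the two bounds through $\max\{a_1,a_2\}\ge\tfrac12(a_1+a_2)$ gives the PLK inequality with exponent $\alpha$ and an explicit constant. That inequality is equivalent to the desingularizing form with $\varphi(t)=Mt^{1-\alpha}$.

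The main obstacle is the case $f(x)\le f(\bar x)$, together with the localization of $f$-values. One subtlety is that $f$ is only lower semicontinuous, so $f(x)$ could lie far below $f(\bar x)$. This is harmless: then $F-F(\bar x,\bar x)\le \tfrac{\beta}{2}\|x-y\|^2$, and the $w$-component alone suffices. The other subtlety is that $f(x)-f(\bar x)$ must be below $\eta_f$, which holds since $f(x)-f(\bar x)\le F(x,y)-F(\bar x,\bar x)<\eta$ once we choose $\eta\le\eta_f$. So the only real care is in choosing the neighborhood small enough that $\|x-y\|\le1$ and $x\in U_f$. These steps reproduce \cite[Theorem~3.6]{li2018calculus}.
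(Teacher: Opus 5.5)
Your argument is correct. The paper gives no proof of its own here and simply quotes \cite[Theorem~3.6]{li2018calculus}. Your direct verification is essentially the argument behind that cited result:
\begin{itemize}
\item the exact sum rule $\partial F(x,y)=\{(v+\beta(x-y),-\beta(x-y))\mid v\in\partial f(x)\}$;
\item the bound $\mathrm{dist}(0,\partial F(x,y))\ge\tfrac{1}{\sqrt2}\max\{\mathrm{dist}(0,\partial f(x)),\beta\|x-y\|\}$;
\item the case split on the sign of $f(x)-f(\bar x)$, using $f(x)\le F(x,y)$ and $\eta\le\eta_f$;
\item $\|x-y\|^{2\alpha}\le\|x-y\|$ for $\|x-y\|\le 1$, which follows from $\alpha\ge 1/2$.
\end{itemize}
The constants close up, for example with $b=\tfrac{1}{2\sqrt2\,K}$ and $K=\max\{b_f^{-1},(\beta/2)^{\alpha}/\beta\}$, so nothing is missing.
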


Note that Theorem~\ref{funcaocoordenada} and Theorem~\ref{meiobloco} don't address the question about the {\em minimality} of PLK exponents under combinations. We now discuss this issue in following examples showing, in particular, that the answer depends on which point is chosen. 

\begin{Example}\label{exa2} {\rm In the setting of Theorem~\ref{funcaocoordenada}, select a point $(\bar{x}_1,\ldots,\bar{x}_m) \in \mathbb{R}^n$ such that $\nabla f_j(\bar{x}_j)=0$, i.e., $\bar{x}_j$ is a {\em stationary/critical point} of $f_j$ for each $j=1,\ldots,m$. Let $\alpha_j\in[0,1)$ be the smallest PLK exponent of $f_j$ at $\bar{x}_j$ as $j=1,\ldots, m$. Fix an index $j\in\{1,\ldots,m\}$ and consider the point $(\bar{x}_1, \bar{x}_2,\ldots , x_j, \ldots, \bar{x}_m)\in\mathbb{R}^n$ different from the original one at the $j$-th position. In this case, for all $j\in \{1,\ldots, m\}$, we have
\begin{eqnarray*}
 &\| \nabla f(\bar{x}_1,\ldots , x_j,\ldots, \bar{x}_m)\|= \|\nabla f_j(x_j)\|\ge \big(f_j(x_j) - f_j(\bar{x}_j)\big)^{\alpha_j} \\
 &=
 \big( f(\bar{x}_1,\ldots, x_j,\ldots,\bar{x}_m) -  f(\bar{x}_1,\ldots , \bar{x}_j,\ldots , \bar{x}_m))^{\alpha_j},\quad j=1,\ldots,m.
\end{eqnarray*}
Hence the smallest PLK exponent of $f$ at $\bar{x}= (\bar{x}_1,\ldots, \bar{x}_m)$ is $\max\{\alpha_1,\ldots, \alpha_m\}$. This tells us that the calculation of Theorem~\ref{funcaocoordenada} provides in fact the {\em minimal PLK exponent} for the block separable sum function \eqref{sum} at the critical point.

To illustrate the above, consider the function
$$
f(x,y) = g(x)+h(y)\;\mbox{ with }\;g(x)=|x|^{3/2}\;\mbox{ and }\;h(y)=y^2,\quad (x,y)\in\mathbb{R}^2.
$$
It is known that $g$ and $h$ satisfy the PLK condition at $\bar{x}=0$ and $\bar{y}=0$ with exponent $1/3$ and $1/2$, respectively. By Theorem~\ref{funcaocoordenada}, we get that $f$ satisfies the PLK condition at $(\bar{x}, \bar{y})=(0,0)$ with exponent $q_0 = \max \{1/3,1/2\}=1/2$. Considering now the point $(\bar{x},y)$ with $y\in\mathbb{R}$ gives us  
$$
|2y| = \|\nabla f(0,y)\|\geq M(|0|^{3/2}+y^2)^{q}= M y^{2q}, \quad M>0.
$$
This allows us to conclude that the smallest possible PLK exponent of $f$ in neighborhood of $(0,0)$ is $q=1/2$.}
\end{Example}

The next example demonstrates that the result of Theorem~\ref{funcaocoordenada} {\em may not} give us the {\em minimal PLK exponent} of $f$ in \eqref{sum}.

\begin{Example}\label{exa3} {\rm Let $f\colon\mathbb{R}^2\to\mathbb{R}$ be given in the block separable sum form $f(x, y) = x^2 + y$. We have $\nabla f(x, y) = (2x, 1)$ and hence deduce from \cite[Lemma~2.1]{li2018calculus} that $f$ satisfies the exponent PLK property for every $q \in [0, 1)$ for all point $(\bar{x},\bar{y})\in \mathbb{R}^2$.}
\end{Example} 

Now we show that the counterpart of Theorem~\ref{funcaocoordenada} {\em fails} if the {\em block separable sum structure} of function \eqref{sum} is violated. 

\begin{Example}\label{exe4} {\rm Consider the bifunction $L\colon\mathbb{R}^2\to\mathbb{R}$ defined by
\begin{equation}\label{L1}
L(x,y): = |x|^{3/2} +|y|^{3/2} + |x-y|^2,\quad (x,y)\in\mathbb{R}^2,
\end{equation}
which is not in the block separable sum form \eqref{sum} while looks rather similar. If the counterpart of Theorem~\ref{funcaocoordenada} was satisfied for \eqref{L1}, the corresponding $q$ would be $1/2$. Let us show that this is not the minimal PLK exponent of $L$ in \eqref{L1}. We see that
$$
\nabla L(x,y) = 
\left( \frac{3}{2} \operatorname{sgn}(x)|x|^{1/2} + 2(x - y),\ \frac{3}{2} \operatorname{sgn}(y)|y|^{1/2} - 2(x - y) \right),
$$ 
and, for any $(x,y)$ in a neighborhood of the critical point $(\bar{x}, \bar{y})=(0,0)$, it follows that
$$
\left\| \left( \frac{3}{2} \operatorname{sgn}(x)|x|^{1/2} + 2(x - y),\ \frac{3}{2} \operatorname{sgn}(y)|y|^{1/2} - 2(x - y) \right) \right\|^3 \geq |x|^{3/2} +|y|^{3/2} + |x-y|^2.
$$
Furthermore, we directly calculate that
$$
\left\|  \nabla L(x,y)   \right\|^3
=\left[ \frac{9}{4}(|x| + |y|) + 8(x - y)^2 + 6(x - y)\big({\rm sign}(x)|x|^{1/2} - {\rm sign}(y)|y|^{1/2} \big)\right]^{3/2},
$$
which implies that the exponent PLK condition of $L$ in \eqref{L1} holds with $q=1/3$.
\begin{figure}[H]
\centering
\includegraphics[width=0.6\textwidth]{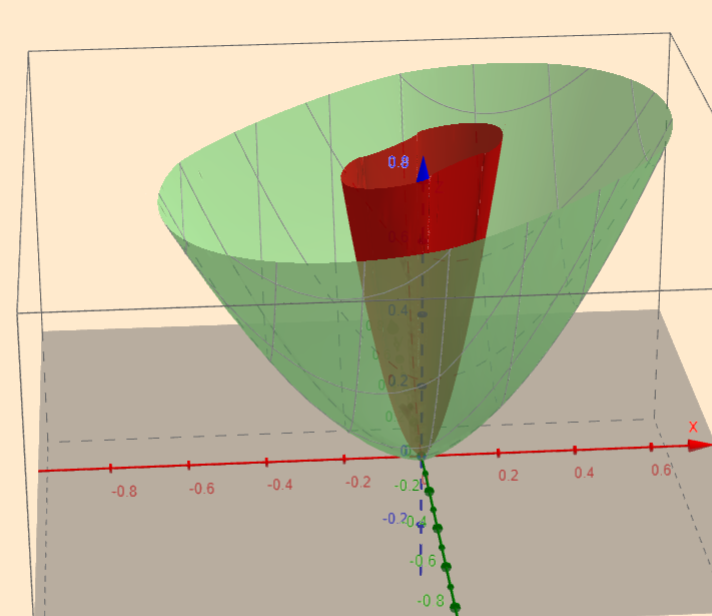} 
\caption{
\protect\tikz[baseline=-0.5ex] \protect\draw[red!70!black, thick] (0,0) -- (0.7,0); $\|\nabla L(x,y)\|^3$ and
\protect\tikz[baseline=-0.5ex] \protect\draw[green!70, thick] (0,0) -- (0.7,0); $L(x,y)$.
}
\end{figure}}
\end{Example}

The last example demonstrates that the {\em proximally perturbed structure} of the bifunction $L$ in Theorem~\ref{meiobloco} is {\em essential} for the theorem conclusion.

\begin{Example}\label{exa5} {\rm Consider the bifunction $L\colon\mathbb{R}^2\to\mathbb{R}$ defined by
\begin{equation*}
L(x,y)=|x| +|y| + |x-y|^2,\quad (x,y)\in\mathbb{R}^2,
\end{equation*}
which resembles \eqref{prox} while not being exactly in that form.
The origin $(\bar{x}, \bar{y})=(0,0)$ is a critical point of $L$.  Picking any $x\neq 0$ and $y\neq 0$, we get 
$$
\nabla L(x,y) = \big({\rm sign}(x) + 2(x-y), {\rm sign}(y) -2(x-y) \big),
$$
which brings us to the relationships
\begin{equation*}
\|\nabla L(x,y)\|^2= 2 +8(x-y)^2 +4(x-y)\big({\rm sign}(x) - {\rm sign}(y)\big)\geq c > 0.
\end{equation*}
\begin{figure}[H]
\centering
\includegraphics[width=0.6\textwidth]{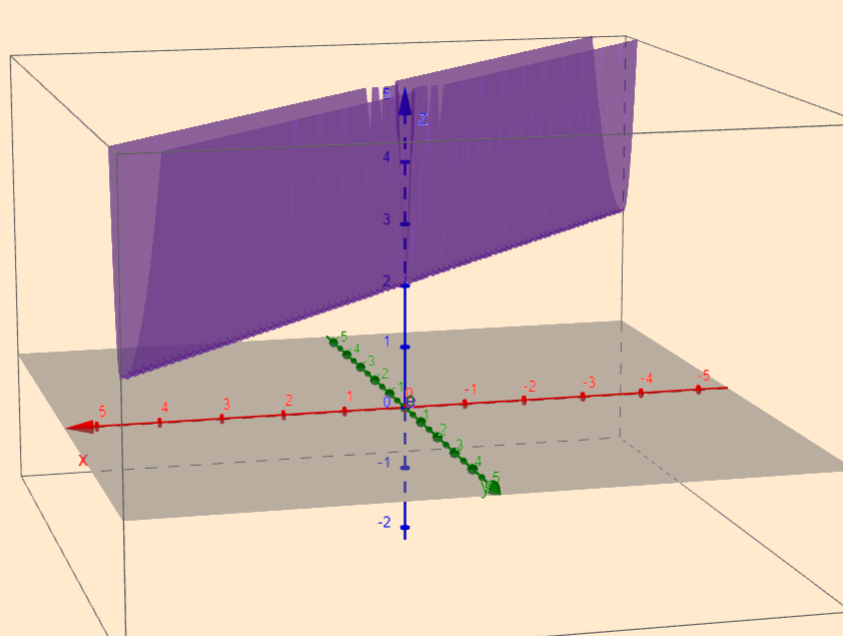}  
\caption{
\protect\tikz[baseline=-0.5ex] \protect\draw[purple!70!black, thick] (0,0) -- (0.5,0); $\| \nabla L(x,y)\|^2$ 
}
\end{figure}
This tells us therefore that for any $q\in[0,1)$ there exist a neighborhood $U$ of the origin and a constant $M>0$ such that
$$
\|\nabla L(x,y)\|\geq c \geq  M\big(|x| +|y| + (x-y)^2\big)^q\;\mbox{ whenever }\;(x,y)\in U,
$$
which indicates the violation of the exponent PLK condition.}
\end{Example}
 
\section{Main Results}\label{sec3}

Recall that the standing assumptions in ${\cal H}$ and ${\cal H}_1$, together with the (basic) PLK condition from Definition~\ref{def:kl}, guarantee the {\em global convergence} of the iterates in the alternating minimization algorithm in \eqref{proximalx}, \eqref{proximaly} to an {\em $M$-stationary/critical point} $(\bar x,\bar y)$ of the cost function $L$ from \eqref{L} satisfying the inclusion $0\in\partial_M L(\bar x,\bar y)$; see \cite[Theorem~9]{attouch2010proximal} and cf.\ \cite[Theorem~1]{bento2025convergence}. In this section, we establish new results about the {\em convergence rates} for the sequences of iterates and cost function values.\vspace*{0.05in}

Before deriving the main results, we present the following useful lemma of its own interest whose device uses the idea that appears within the proof of \cite[Theorem~3.2]{Absil2005}.

\begin{Lemma}\label{propositionAbsil2005}
Let $\{a_k\}\subset[0,\infty)$ be a monotonically decreasing sequenced. Then for all $q\in(0,1)$, we have the estimate
\begin{equation}
\sum_{j=k}^{k+l} \frac{a_j - a_{j+1}}{a_j^q} \leq \frac{1}{1-q}\left(a_k^{1-q} - a_{k+l+1}^{1-q}\right), \quad k, l \in \mathbb{N}.
\end{equation}
\end{Lemma}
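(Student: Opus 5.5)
The plan is to prove the estimate term by term and then telescope. Fix $j$ with $a_j>0$ (if $a_j=0$ then, by monotonicity and nonnegativity, $a_{j+1}=0$; we interpret the corresponding summand as $0$, consistent with the convention used in applications). The key single-step inequality we aim for is
\begin{equation*}
\frac{a_j-a_{j+1}}{a_j^q}\le \int_{a_{j+1}}^{a_j}\frac{dt}{t^q}=\frac{1}{1-q}\left(a_j^{1-q}-a_{j+1}^{1-q}\right).
\end{equation*}
This follows because $t\mapsto t^{-q}$ is decreasing on $(0,\infty)$ for $q\in(0,1)$. Hence on the interval $[a_{j+1},a_j]$, which is well ordered since the sequence is decreasing, we have $t^{-q}\ge a_j^{-q}$. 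Integrating over an interval of length $a_j-a_{j+1}\ge 0$ gives the bound. If $a_{j+1}=0$, the integral is still finite because $q<1$, so the same computation applies.

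Equivalently, one can avoid integrals and use the concavity of $\psi(t)=t^{1-q}/(1-q)$ on $[0,\infty)$. The tangent-line inequality at $a_j$ reads $\psi(a_{j+1})\le\psi(a_j)+\psi'(a_j)(a_{j+1}-a_j)$, with $\psi'(a_j)=a_j^{-q}$. Rearranging gives exactly the single-step inequality. This is the idea from the proof of \cite[Theorem~3.2]{Absil2005}.

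Summing the single-step inequality over $j=k,\ldots,k+l$, the right-hand side telescopes to $\frac{1}{1-q}\left(a_k^{1-q}-a_{k+l+1}^{1-q}\right)$, which is the claimed estimate. There is no real obstacle here. The only delicate point is the degenerate case where some $a_j=0$, which makes the left-hand summand formally $0/0$. I would handle it by noting that once a term vanishes, all later terms vanish too, so those summands are set to zero while the right-hand increments are also zero.
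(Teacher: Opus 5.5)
Your proposal is correct and follows the paper's proof: both bound each summand by $\int_{a_{j+1}}^{a_j} t^{-q}\,dt=\frac{1}{1-q}\bigl(a_j^{1-q}-a_{j+1}^{1-q}\bigr)$, using that $t\mapsto t^{-q}$ is decreasing, and then telescope (the paper leaves the final summation implicit). Your explicit treatment of the degenerate case $a_j=0$, where the summand is formally $0/0$, fills in a point the paper's statement passes over in silence.
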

\begin{proof}
Since ${a_k} \subset [0,\infty)$ is monotonically decreasing, it follows that $a_j \geq t$ for all $t \in [a_{j+1}, a_j]$, $j\in\mathbb{N}$. This readily yields
\begin{equation*}
\frac{1}{(a_j)^q} \leq \frac{1}{t^q}, \quad t \in [a_{j+1}, a_j], \quad q \in (0,1),
\end{equation*}
which implies therefore that
\begin{equation*}
\frac{a_j - a_{j+1}}{a_j^q} =\int_{a_{j+1}}^{a_j} \frac{1}{a_j^q}dt\leq \int_{a_{j+1}}^{a_j} \frac{1}{t^q}dt.
\end{equation*}
The latter tells us in turn that 
\begin{equation*}
\frac{a_j - a_{j+1}}{a_j^q}\leq\frac{1}{1-q } \left(a_j^{1- q} - a_{j+1}^{1-q}\right), \quad j\in\mathbb{N},
\end{equation*} 
and thus we are done with the proof of the lemma.
\end{proof}

Our first convergence theorem establishes convergence rates for the {\em value sequence} of the alternating minimization algorithm in \eqref{proximalx}, \eqref{proximaly}. 

\begin{Theorem}\label{rateimage}
Suppose that the cost function \( L \) in \eqref{L} satisfies the assumptions in $\mathcal{H}$ and $\mathcal{H}_1$. Let the sequence $\{(x_k, y_k)\}$ converge to \( (x^\ast, y^\ast) \), and let \( L \) satisfy the exponent PLK condition at \( (x^\ast, y^\ast) \) with  
\[
{\varphi}(t) = M t^{1 - q}, \quad q \in [0,1), \quad M > 0.
\]  
Then we have the following convergence rates for the sequence $\{L(x_k,y_k)\}$: 
   
{\bf(i)} If \( q = 0 \), then $\{L(x_k, y_k)\}$ has the finite termination. 

{\bf(ii)} If \( q \in \left(0, \frac{1}{2} \right) \), then $\{L(x_k, y_k)\}$ either has the finite termination or converges superlinearly to $L(x^\ast, y^\ast)$. 

{\bf(iii)} If $q = 1/2$, then $\{L(x_k, y_k)\}$ converges linearly to $L(x^\ast, y^\ast)$.

{\bf(iv)} If \( q \in \left( \frac{1}{2}, 1 \right) \), then there exists \( \gamma > 0 \) such that  
\[
L(x_k, y_k) - L(x^\ast, y^\ast) \leq \gamma k^{-\frac{1}{2q-1}}\;\mbox{ for all }\;k\in\mathbb{N}.
\]
\end{Theorem}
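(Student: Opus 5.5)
We plan to derive a one-step recursion for the gap $r_k:=L(x_k,y_k)-L(x^\ast,y^\ast)\ge 0$ by combining the sufficient decrease \eqref{decrescimo} of Lemma~\ref{basic-lem}(i) with the relative error bound from Lemma~\ref{basic-lem}(iii), and then feed it into the PLK inequality \eqref{desigualdadeklND}.

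First note that $\{r_k\}$ is nonincreasing by Lemma~\ref{basic-lem}(i). Lower semicontinuity gives $L(x^\ast,y^\ast)\le\liminf L(x_k,y_k)$. The standard argument of \cite{attouch2010proximal} (optimality of $x_{k+1}$ tested at $u=x^\ast$, plus continuity of $Q$) gives $L(x_k,y_k)\to L(x^\ast,y^\ast)$. Hence $r_k\ge 0$ and $r_k\downarrow 0$. If $r_{k_0}=0$ for some $k_0$, then $r_k=0$ for all $k\ge k_0$, which is finite termination. So we may assume $r_k>0$ for all $k$.

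Since the iterates converge, they are bounded. Let $\ell$ be the Lipschitz constant of $\nabla Q$ on a bounded set containing them. From the formula for $(x_k^\ast,y_k^\ast)$ and $\lambda_k,\mu_k\in(r_-,r_+)$ we get
\[
\mathrm{dist}\big(0,\partial L(x_k,y_k)\big)\le \|(x^\ast_k,y^\ast_k)\|\le C\big(\|x_k-x_{k-1}\|+\|y_k-y_{k-1}\|\big)
\]
with $C:=\ell+1/r_-$. From \eqref{decrescimo},
\[
r_{k-1}-r_k\ge \tfrac{1}{2r_+}\big(\|x_k-x_{k-1}\|^2+\|y_k-y_{k-1}\|^2\big)\ge \tfrac{1}{4r_+}\big(\|x_k-x_{k-1}\|+\|y_k-y_{k-1}\|\big)^2 .
\]
Together these give $\mathrm{dist}(0,\partial L(x_k,y_k))^2\le c\,(r_{k-1}-r_k)$ with $c:=4r_+C^2$.

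For $k$ large, $(x_k,y_k)\in U$ and $0<r_k<\eta$, so \eqref{desigualdadeklND} with $\varphi'(t)=M(1-q)t^{-q}$ yields $\mathrm{dist}(0,\partial L(x_k,y_k))\ge r_k^q/(M(1-q))$. Squaring and combining with the previous bound gives the key recursion
\[
r_k^{2q}\le K\,(r_{k-1}-r_k),\qquad K:=c\,M^2(1-q)^2 .
\]
We then split into cases.

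\textbf{(i) $q=0$.} The recursion reads $1\le K(r_{k-1}-r_k)$, so every step drops the value by at least $1/K$. This contradicts $r_k\to 0$, so $r_k>0$ for all $k$ is impossible and finite termination holds.

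\textbf{(ii) $q\in(0,1/2)$.} From $r_k^{2q}\le K r_{k-1}$ we get $r_k\le (K r_{k-1})^{1/(2q)}$. The exponent $1/(2q)>1$, so $r_k/r_{k-1}\le K^{1/(2q)} r_{k-1}^{1/(2q)-1}\to 0$, which is superlinear convergence.

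\textbf{(iii) $q=1/2$.} The recursion gives $r_k\le K(r_{k-1}-r_k)$, so $r_k\le \frac{K}{1+K}\,r_{k-1}$, which is linear convergence.

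\textbf{(iv) $q\in(1/2,1)$.} This is the main technical step; we use the classical Attouch--Bolte argument. Set $\phi(t)=t^{1-2q}$, which is decreasing. Fix $R>1$.
\begin{itemize}
\item If $r_k^{-2q}\le R\,r_{k-1}^{-2q}$, then $1\le K(r_{k-1}-r_k)r_k^{-2q}\le KR\int_{r_k}^{r_{k-1}}t^{-2q}\,dt$, which gives $\phi(r_k)-\phi(r_{k-1})\ge \frac{2q-1}{KR}$.
\item Otherwise $r_k< R^{-1/(2q)}r_{k-1}$, so $\phi(r_k)-\phi(r_{k-1})\ge (R^{(2q-1)/(2q)}-1)\,\phi(r_{k-1})\ge \mu>0$, using $\phi(r_{k-1})\ge\phi(r_{k_0})$.
\end{itemize}
Summing from a threshold $k_0$ gives $r_k^{1-2q}\ge \nu(k-k_0)$ for some $\nu>0$, hence $r_k\le \gamma' k^{-1/(2q-1)}$ for large $k$. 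Enlarging $\gamma'$ to cover finitely many initial indices gives the bound for all $k\in\mathbb{N}$. The delicate points are this case split and the bookkeeping of constants, which we would verify carefully.
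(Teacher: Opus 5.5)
Your proposal is correct and follows essentially the same route as the paper: the same key recursion $r_k^{2q}\le K(r_{k-1}-r_k)$, obtained by combining sufficient decrease, the relative error bound and the PLK inequality, followed by the same four-case analysis, including the Attouch--Bolte dichotomy in (iv). Two steps are cleaner in your version than in the paper's write-up: the per-index case split in (iv), and the explicit justification of $L(x_k,y_k)\to L(x^\ast,y^\ast)$ via the optimality of the iterates, which lower semicontinuity alone does not give.
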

\noindent
{\bf Proof}.
As we know, the sequence $\{(x^k,y^k)\}$ generated by \eqref{proximalx} and \eqref{proximaly} converges to $(\bar{x}, \Bar{y})$, which is an $M$-stationary point of problem \eqref{L}
From \eqref{decrescimo}, we have that the sequence $\{L(x^k,y^k)\}$ is monotonically decreasing. Since $(\bar{x},\bar{y})$ is an accumulation point of $\{(x^k,y^k)\}$, it follows that $\{L(x^k,y^k)\}$ converges to $L(x^\ast,y^\ast)$ as $k\to\infty$.
Defining 
\begin{equation}\label{Psi}
\Psi(x,y):= L(x,y) - L(x^\ast,y^\ast),
\end{equation}
we get that the sequence  of nonnegative numbers 
$\{\Psi(x^k,y^k)\}$ is monotonically decreasing. Combining \eqref{decrescimo} and \eqref{Psi} with
\begin{equation}\label{eq.24}
\|(x_k^* , y_k^*)\| \leq\Big(C + \frac{1}{r_{-}}\Big)\|(x_k, y_k) -  (x_{k-1}, y_{k-1}) \| ,\quad C>0,  
\end{equation}
leads us to the inequalities
\begin{eqnarray}
&\Psi(x_{k-1}, y_{k-1}) - \Psi(x_k, y_k)\geq 
\displaystyle\frac{1}{2 \lambda_{k-1}} \|x_k - x_{k-1}\|^2 + \frac{1}{2 \mu_{k-1}} \|y_k - y_{k-1}\|^2 \, \nonumber \\
&\ge\displaystyle\frac{1}{2 r_+} \|z_k - z_{k-1}\|^2   
\ge\frac{1}{2 r_+(C+ 1/r_- )^2}\|(x^*_k, y^*_k)\|^2\nonumber
\end{eqnarray}
with $(x^*_k, y^*_k) \in \partial L(x_k,y_k)$ and $r_-,r_+$ taken from ${\cal H}_1$. The last inequality above and the imposed exponent PLK condition produce the estimate
\begin{equation}\label{desiq.1}
\Psi(x_{k-1}, y_{k-1}) - \Psi(x_k, y_k)\geq C_0\big( \Psi(x_k, y_k) \big)^{2q},
\end{equation}
where the constant in \eqref{desiq.1} is defined by
\begin{equation*}
C_0:= \frac{1}{2 r_+(C+ 1/r_- )^2 c^2(1-q)^2}>0.
\end{equation*}

Now we are ready to proceed with verifying each assertions in (i)--(iv).\vspace*{0.05in}

{\bf(i)} Let $q=0$. Supposing that the sequence $\{L(x_k,y_k)\}$ is not finitely generated, we get from \eqref{desiq.1} that
$$
\Psi(x_{k-1}, y_{k-1}) - \Psi(x_k, y_k)\geq C_0\;\mbox{ for all large }\;k,
$$
which is a clear contradiction.

{\bf(ii)} Let $q\in (0,1/2)$. Suppose that the sequence 
$\{\L(x_k),y_k)\}$ is not finitely terminated. Then it follows from \eqref{desiq.1} that
\begin{equation}\label{C0}
1+ C_0\big( \Psi(x_k, y_k) \big)^{2q -1 }\leq \frac{\Psi(x_{k-1}, y_{k-1})}{\Psi(x_k, y_k)},
\end{equation}
Since $q\in (0,1/2)$, we have $2q-1<0$ and then deduce from \eqref{C0} and the convergence $\Psi(x_k,y_k)\to 0$ as $k\to\infty$ that
$$
\displaystyle \lim_{k \to \infty}\frac{\Psi(x_{k-1}, y_{k-1})}{\Psi(x_k, y_k)}= \infty.
$$
The latter can be equivalently rewritten as
$$
\lim_{k \to \infty}\frac{\Psi(x_k, y_k)}{\Psi(x_{k-1}, y_{k-1})}= 0,
$$
which yields the claimed superlinear convergence of 
$\{L(x_k,y_k)\}$ to $L(x^*,y^*)$.

{\bf(iii)} For $q = 1/2$, we get from \eqref{desiq.1} that 
$$
\Psi(x_k, y_k)\leq \frac{1}{1+C_0} \Psi(x_{k-1}, y_{k-1}).
$$
Since $1/(1+C_0) \in (0,1)$, assertion (iii) is a consequence of the last inequality.

{\bf(iv)} If $q \in (1/2, 1)$, the estimate in \eqref{desiq.1} tells us that
\begin{equation}\label{C0a}
C_0\leq \frac{\Psi(x_{k-1}, y_{k-1}) - \Psi(x_k, y_k)}{\big( \Psi(x_k, y_k) \big)^{2q}}.
\end{equation}
Denote $\displaystyle \Phi(t):= \frac{1}{t^{2q}}$ and suppose first that there exists $\mu\in (1,\infty)$ such that 
$$
\Phi\big(\Psi(x_k, y_k)\big)\le\mu\,\Phi(\Psi(x_{k-1}, y_{k-1})).
$$
Combining the latter with \eqref{C0a} and Lemma~\ref{propositionAbsil2005} leads us to the relationships
\begin{eqnarray}
\begin{array}{ll}
&C_0 \le \displaystyle\frac{\Psi(x_{k-1}, y_{k-1}) - \Psi(x_k, y_k)}{\big( \Psi(x_k, y_k) \big)^{2q}}=\displaystyle\int_{\Psi(x_k, y_k)}^{\Psi(x_{k-1}, y_{k-1})}\Phi\big(\Psi(x_k, y_k)\big)dt \, \nonumber \\  
&\displaystyle\le\mu\int_{\Psi(x_k, y_k)}^{\Psi(x_{k-1}, y_{k-1})}\Phi\big(\Psi(x_{k-1}, y_{k-1})\big)dt\\
&
\le\displaystyle\frac{\mu}{1-2q} \big[\big(\Psi(x_{k-1}, y_{k-1})\big)^{1-2q}- \big(\Psi(x_k, y_k)\big)^{1-2q}\big] \, \nonumber \\  
& =\displaystyle\frac{\mu}{2q-1}\big[\big(\Psi(x_k, y_k)\big)^{1-2q} -\big(\Psi(x_{k-1}, y_{k-1})\big)^{1-2q}\big],\nonumber
\end{array}
\end{eqnarray}
which imply by $2q-1 > 0$ that
\begin{equation}\label{3.3}
\frac{C_0(2q-1)}{\mu} \leq \big(\Psi(x_k, y_k)\big)^{1-2q} -\big(\Psi(x_{k-1}, y_{k-1})\big)^{1-2q}, \quad k\geq k_0, 
\end{equation}
with some $k_0\in\mathbb{N}$. Letting $j>k_0$, we deduce from \eqref{3.3} that
$$
\sum_{k=k_0}^{j}\frac{C_0(2q-1)}{P} \leq 
\sum_{k=k_0}^{j} \big(\Psi(x_k, y_k)\big)^{1-2q} -\big(\Psi(x_{k-1}, y_{k-1})\big)^{1-2q},
$$
which ensures in turn that
$$
\frac{(j+1-k_0)C_0(2q-1)}{\mu} \leq \big(\Psi(x_j,y_j)\big)^{1-2q} - \big(\Psi(x_{k_0},y_{k_0})\big)^{1-2q}.
$$
Consequently, we get the estimate
$$
\big(\Psi(x_j,y_j)\big)^{1-2q} \geq \frac{(j+1-k_0)C_0(2q-1)}{\mu} + \big(\Psi(x_{k_0},y_{k_0})\big)^{1-2q}.
$$
Since the function $ t\mapsto t^{-\frac{1}{2q-1}}$ is decreasing  for $q\in (1/2,1)$, it follows that
$$
\Psi(x_j,y_j) \leq \left(\frac{(j+1-k_0)C_0(2q-1)}{\mu} + \big(\Psi(x_{k_0},y_{k_0})\big)^{1-2q}\right)^{-\frac{1}{2q-1}}.
$$
This allows us to find a number $\eta>0$ such that
$$
\Psi(x_j,y_j) \leq \eta j^{-\frac{1}{2q-1}},  
$$
which therefore confirms the claimed convergence rate in (iv) for this case.

Now we examine the remaining situation in {\bf(iv)} where the number
$\mu$ mentioned above doesn't exist, i.e., for any
$\mu\in(1,\infty)$ we have 
$$
\Phi\big(\Psi(x_{k},y_{k})\big)>\mu\,\Phi\big(\Psi(x_{k-1}, y_{k-1})\big)\;\mbox{ when }\;k\;\mbox{ is sufficiently large}.
$$
Fix $\mu\in(1,\infty)$ and define $\mu_1: = \frac{1}{P^{\frac{1}{2q}}}$. Then it follows from the definitions that $\Psi(x_{k},y_{k})\le\mu_1 \Psi(x_{k-1},y_{k-1})$ and therefore 
$$
\Psi(x_{k},y_{k})^{1-2q}\ge\mu_1^{1-2q}\Psi(x_{k-1},y_{k-1})^{1-2q}
$$ 
since $1-2q<0$. Subtracting $\Psi(x_{k-1},y_{k-1})^{1-2q}$
on both sides of the last inequality gives us the estimate
$$
\Psi(x_{k},y_{k})^{1-2q} - \Psi(x_{k-1},y_{k-1})^{1-2q} \geq \big(\mu_1^{1-2q} -1\big)\Psi(x_{k-1},y_{k-1})^{1-2q}. 
$$
Note that $\mu_1 \in (0,1)$ and therefore $\mu_1^{1-2q}>1$ for all $q\in(1/2,1)$. The convergence $\Psi(x_{k},y_{k})\to 0$ yields  $\psi(x^k)^{1-2q}\to\infty$ as $k\to\infty$, and hence
$$
\Psi(x_{k},y_{k})^{1-2q} - \Psi(x_{k-1},y_{k-1})^{1-2q} \geq \frac{C_0(2q-1)}{\mu}
$$
for large $k$. The rest of the proof is similar to the above arguments based on  \eqref{3.3}, and thus we are done with verifying (iv) and the entire theorem. $\h$ \vspace*{0.08in}

The second theorem in this section provides convergence rates for the {\em sequence of iterates} $\{x_k\}$ of the alternating minimization algorithm in \eqref{proximalx} and \eqref{proximaly} under the exponent PKL condition. The main improvement is in the lower exponent assertion (ii), and we'll concentrate on its proof.

\begin{Theorem}\label{iter}  
Suppose that the assumptions in $\mathcal{H}$ and $\mathcal{H}_1$ are fulfilled for the cost function \( L \)  in \eqref{L}, that the sequence of iterates $\{(x_k,y_k)\}$ in the alternating minimization algorithm \eqref{proximalx}, \eqref{proximaly} converges to the $M$-stationary point \( (x^\ast, y^\ast) \)  of \eqref{L}, and that \( L \) satisfies the exponent PLK property at \( (x^\ast, y^\ast) \) with  
\[
\varphi(t) = M t^{1 - q}, \quad q \in [0,1), \quad M > 0.
\]  
Then the following assertions hold:  
   
{\bf(i)} If \( q = 0 \), then the sequence $\{(x_k, y_k)\}$ converges in a finite number of steps. 

{\bf(ii)} If \( q \in \left(0, \frac{1}{2} \right) \), then $(x_k, y_k)$ either has the finite termination or converges superlinearly to $(x^\ast, y^\ast)$. 

{\bf(iii)} If $q = 1/2$, then $\{(x_k, y_\})$ converges linearly to $(x^\ast, y^\ast)$.

{\bf(iv)} If \( q \in \left( \frac{1}{2}, 1 \right) \), then there exists \( \gamma > 0 \) such that  
\[
(x_k, y_k) - (x^\ast, y^\ast) \leq \gamma k^{-\frac{1-q}{2q-1}}.
\]
\end{Theorem}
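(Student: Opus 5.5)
Set $z_k:=(x_k,y_k)$, $z^\ast:=(x^\ast,y^\ast)$ and $\Psi_k:=\Psi(x_k,y_k)=L(z_k)-L(z^\ast)\ge 0$. The plan is to control the tail $\sum_{j\ge k}\|z_{j+1}-z_j\|$, which dominates $\|z_k-z^\ast\|$, by the value gap $\Psi_k$. Then I transfer the rates of Theorem~\ref{rateimage} to the iterates.

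\textbf{Step 1: tail bound.} The proof of Theorem~\ref{rateimage} gives the sufficient decrease
\[
\Psi_{j}-\Psi_{j+1}\ge \tfrac{1}{2r_+}\|z_{j+1}-z_j\|^2 .
\]
It also gives the relative error bound $\mathrm{dist}(0,\partial L(z_j))\le (C+1/r_-)\|z_j-z_{j-1}\|$ from \eqref{eq.24}. The exponent PLK inequality reads $M(1-q)\Psi_j^{-q}\,\mathrm{dist}(0,\partial L(z_j))\ge 1$ whenever $\Psi_j>0$ and $k$ is large. Combining these in the standard Attouch--Bolte way gives
\[
\|z_{j+1}-z_j\|^2\le 2r_+(\Psi_j-\Psi_{j+1})\le 2r_+M(1-q)(C+1/r_-)\|z_j-z_{j-1}\|\,\frac{\Psi_j-\Psi_{j+1}}{\Psi_j^{q}} .
\]
Next I apply Lemma~\ref{propositionAbsil2005}, which bounds $(\Psi_j-\Psi_{j+1})\Psi_j^{-q}$ by $\frac{1}{1-q}(\Psi_j^{1-q}-\Psi_{j+1}^{1-q})$. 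Then $2\sqrt{ab}\le a+b$ and summation over $j\ge k$ yield
\[
\sum_{j\ge k}\|z_{j+1}-z_j\|\le \|z_k-z_{k-1}\|+K\,\Psi_k^{1-q}.
\]
Using $\|z_k-z_{k-1}\|\le\sqrt{2r_+\Psi_{k-1}}$, this gives the key estimate
\[
\|z_k-z^\ast\|\le \sqrt{2r_+}\,\Psi_{k-1}^{1/2}+K\,\Psi_k^{1-q}\le K'\,\Psi_{k-1}^{\min\{1/2,\,1-q\}} .
\]

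\textbf{Step 2: cases.}

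(i) For $q=0$, Theorem~\ref{rateimage}(i) says $\Psi_k=0$ from some index on. The sufficient decrease then forces $z_{k+1}=z_k$ thereafter, so the iterates terminate finitely.

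(ii) For $q\in(0,1/2)$, suppose the sequence does not terminate. Since $\Psi_k$ is nonincreasing, $\Psi_k>0$ for all $k$, and Theorem~\ref{rateimage}(ii) gives $\Psi_k/\Psi_{k-1}\to0$. Then $\Psi_k\le\varepsilon^k$ eventually for every $\varepsilon>0$, so by the key estimate $\|z_k-z^\ast\|^{1/k}\to0$. This is superlinear (R-superlinear) convergence.

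(iii) For $q=1/2$, the linear rate of $\Psi_k$ transfers directly through $\Psi_{k-1}^{1/2}$, giving a linear rate for the iterates.

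(iv) For $q\in(1/2,1)$, we have $1-q<1/2$ and $\Psi_k\le\gamma k^{-1/(2q-1)}$. Hence $\|z_k-z^\ast\|\lesssim \Psi_{k-1}^{1-q}\le\gamma' k^{-(1-q)/(2q-1)}$, because the $\Psi_{k-1}^{1/2}$ term decays faster.

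\textbf{Main obstacle.} I expect the main difficulty to be assertion (ii). The key estimate only controls the distance to the limit by a power of $\Psi$, so the natural conclusion is an R-superlinear (root) rate. A Q-superlinear rate in the sense $\|z_{k+1}-z^\ast\|/\|z_k-z^\ast\|\to0$ would additionally need a lower bound $\|z_k-z^\ast\|\gtrsim\Psi_k^{\theta}$, which is not available in general. I would therefore first establish the R-superlinear statement above as the meaning of ``superlinear.'' To strengthen it, I would try to exploit the PLK inequality together with the Lipschitz continuity of $\nabla Q$ and the subgradient formula of Lemma~\ref{basic-lem}(iii) to compare consecutive step lengths $\|z_{k+1}-z_k\|/\|z_k-z_{k-1}\|$ directly, though I would not count on this succeeding.
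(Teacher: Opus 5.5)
Your proposal is correct and follows essentially the same route as the paper. The paper also bounds the tail $s_k=\sum_{j\ge k}\|z_{j+1}-z_j\|$ by $c_1\|z_k-z_{k-1}\|+c_2\Psi_k^{1-q}\le c\,\Psi_{k-1}^{1/2}$ and then transfers the rate from Theorem~\ref{rateimage}(ii), with two differences: it uses a case split on whether $\|z_{k+1}-z_k\|\ge r\|z_k-z_{k-1}\|$ in place of your $2\sqrt{ab}\le a+b$, and it cites \cite[Theorem~11]{attouch2010proximal} for (i), (iii), (iv) rather than deriving them from the key estimate as you do. Your observation that this only gives an R-superlinear rate is accurate and applies equally to the paper's argument, which likewise never produces a lower bound on $\|z_k-z^\ast\|$.
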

{\bf Proof}. Assertions (i), (iii), and (iv) follow from \cite[Theorem~11]{attouch2010proximal}, where a linear convergence version of (ii) is also obtained. Now we do much better as formulated in (ii). It follows from \cite[Theorem~3.1]{attouch2010proximal} that
\begin{equation}\label{3.33}
\varphi\big(L(x_k, y_k) - L(x^\ast, y^\ast)\big) - \varphi\big(L(x_{k+1}, y_{k+1}) - L(x^\ast, y^\ast)\big) \geq \frac{1}{\vartheta} \frac{\|z_{k+1} - z_k\|^2}{\|z_k - z_{k-1}\|},
\end{equation}
where $\vartheta:=2r_+(C + 1/r_-)$ and $z_k = (x_k, y_k)$ with some $C>0$. Given $r\in (0, 1)$, we have the following two possibilities for each $k\in\mathbb{N}$:\\[0.5ex]
\noindent
{\bf(a)} $\|z_{k+1} - z_k\| \geq r\|z_k - z_{k-1}\|$.\\[0.5ex]
\noindent
{\bf(b)} $\|z_{k+1} - z_k\| < r\|z_k - z_{k-1}\|$.\vspace*{0.08in}

In case {\bf(a)}, it follows from \eqref{3.33} with $\varphi(t)= Mt^{1-q}$ with the notation $\Psi(x,y):= L(x,y) - L(x^\ast,y^\ast)$ that
\begin{equation*}
\begin{array}{ll}
\|z_{k+1} - z_k\|&\le\displaystyle\frac{\vartheta M}{r} \Psi(z_k)^{1-q} - \displaystyle\frac{\vartheta M}{r}\Psi(z_{k+1})^{1-q}\\
&\leq  r\|z_k - z_{k-1}\| +\displaystyle\frac{\vartheta M}{r}\Psi(z_{k})^{1-q} -\displaystyle\frac{\vartheta M}{r}\Psi(z_{k+1})^{1-q}.
\end{array}
\end{equation*}
Conversely, in case {\bf(b)}, we immediately get
$$
\|z_{k+1} - z_k\| \leq  r\|z_k - z_{k-1}\| \leq r\|z_k - z_{k-1}\| + \frac{\vartheta M}{r}\Psi(z_{k})^{1-q} - \frac{\vartheta M}{r}\Psi(z_{k+1})^{1-q}.
$$
Therefore, in both cases it holds that
\begin{equation}
\|z_{k+1} - z_k\| \leq r\|z_k - z_{k-1}\| + \frac{\vartheta M}{r}\Psi(z_{k})^{1-q} - \frac{\vartheta M}{r}\Psi(z_{k+1})^{1-q}.
\end{equation}
The latter readily implies that for any $\l\in\mathbb{N}$ we have
\begin{equation}\label{somatorio3}
\sum_{j=k}^{k+l}\| z_{k+1} - z_k\| \leq \frac{r}{1-r}\|z_k - z_{k-1} \| + \frac{\vartheta M}{r(1-r)}\big(\Psi(z_{k})^{1-q}  - \Psi(z_{k+l+1})^{1-q}\big). 
\end{equation}
Define further the value (which is a finite number under the basic PLK condition as follows from Lemma~\ref{basic-lem}(ii))
$$
s_k:= \sum_{j=k}^{\infty}\|z_{k+1} - z_k\|.
$$
Passing to the limit  $l\to\infty$ in \eqref{somatorio3} leads us to
$$
s_k  \leq \frac{r}{(1-r)}\|z_k - z_{k-1} \| + \frac{\vartheta M}{r(1-r)} \Psi(z_{k})^{1-q}.
$$
Using the obtained inequality together with \eqref{decrescimo} and  $L(z_{k-1})- L(z_k)\le L(z_{k-1})-L(x^\ast,y^\ast)=\Psi(z_{k-1})$ gives us the estimate
\begin{equation}\label{eq:1111PA}
s_k \leq \frac{r}{(1-r)} \big(\Psi(z_{k-1})\big)^{1/2} + \frac{\vartheta M}{r(1-r)} \Psi(z_{k})^{1-q}. 
\end{equation}
When $q\in(0,1/2)$, it follows from \eqref{eq:1111PA} that 
\begin{equation}
s_k \leq \frac{r}{(1-r)} \big(\Psi(z_{k-1})\big)^{1/2} + \frac{\vartheta M}{r(1-r)} \Psi(z_{k})^{1/2}. 
\end{equation}
Since the sequence $\{\psi(x^k)\}$ is monotonically decreasing, we get 
$$
s_k \leq \frac{r^2 + \vartheta M}{(1-r)} \big(\Psi(z_{k-1})\big)^{1/2},
$$
and the result follows from Theorem~\ref{rateimage}(ii), which completes the proof.
$\h$\vspace*{0.05in}

\section{Discussions on Applications to Noncooperative Games and Behavioral Science}\label{sec:appl}

As mentioned in Section~\ref{sec1}, {\em convergence rate} of numerical algorithms are of crucial importance to solve various classes of {\em noncooperative games} in game theory and applications. From this viewpoint, the convergence results obtained in Section~\ref{sec3} for the alternating minimization algorithm present valuable refinements in practically important models of {\em alternating games} with symmetric or asymmetric costs to move. We refer the reader to, e.g., Cruz Neto et al. \cite{neto2013learning} and Soubeyran et al. \cite{soubeyran2019variational} for some models of {\em worthwhile-to-move potential games} played in asymmetric metric spaces, where each agent plays, in alternation, a worthwhile move given the action done by the other. The central concept of such behavioral games is that of an {\em individual worthwhile move} that
improves a player's payoff without requiring a too high cost to move if other players stay at the status quo. That is, it better satisfies the needs and desires of this individual without too many sacrifices for himself or herself. This concept is borrowed from the {\em variational rationality approach} of
individual and social stay and change human dynamics; see, e.g., \cite{soubeyran2009variational,soubeyran2010variational} and other developments documented in
$$
{\rm https://sites.google.com/view/antoine-soubeyran}
$$
for applications of different optimization algorithms to models behavioral science.

The obtained results on fast convergence and especially on the finite termination for the alternating optimization algorithm are of {\em striking importance} for applications to such models. They show that the finite termination holds when the payoff functions of two players are sharp enough concerning the worthwhile change in their own actions. A lot of applications to game theory can be expected in both static and dynamic frameworks. The concept of the {\em existence of temporary/permanent traps} will be among central topics of our future research; see \cite{ms19} for the unified approach of variational analysis and variational stationarity to these issues.
New potential functions will be used to get novel results on convergence rates under low exponent PLK conditions.

\bmhead{Acknowledgements}
Research of Boris Mordukhovich was supported by the US National Science Foundation under grant DMS-2204519 and by the Australian Research Council under Discovery Project DP250101112. Research of Glaydston Bento was supported by CNPq grants 314106/2020-0. Research of Antoine Soubeyran  was supported by the French National Research Agency Grant ANR-17-EURE-0020 and by the Excellence Initiative of Aix-Marseille University: A*MIDEX.

\section*{Declarations}
The authors declare that they have no conflict of interest.

\end{document}